\newlist{enumth}{enumerate}{1}
\setlist[enumth]{label=\emph{(\arabic*)}, ref=(\arabic*)}
\tikzset{commutative diagrams/arrow style=math font}
\DeclareMathSymbol{A}{\mathalpha}{operators}{`A}%
\DeclareMathSymbol{B}{\mathalpha}{operators}{`B}%
\DeclareMathSymbol{C}{\mathalpha}{operators}{`C}%
\DeclareMathSymbol{D}{\mathalpha}{operators}{`D}%
\DeclareMathSymbol{E}{\mathalpha}{operators}{`E}%
\DeclareMathSymbol{F}{\mathalpha}{operators}{`F}%
\DeclareMathSymbol{G}{\mathalpha}{operators}{`G}%
\DeclareMathSymbol{H}{\mathalpha}{operators}{`H}%
\DeclareMathSymbol{I}{\mathalpha}{operators}{`I}%
\DeclareMathSymbol{J}{\mathalpha}{operators}{`J}%
\DeclareMathSymbol{K}{\mathalpha}{operators}{`K}%
\DeclareMathSymbol{L}{\mathalpha}{operators}{`L}%
\DeclareMathSymbol{M}{\mathalpha}{operators}{`M}%
\DeclareMathSymbol{N}{\mathalpha}{operators}{`N}%
\DeclareMathSymbol{O}{\mathalpha}{operators}{`O}%
\DeclareMathSymbol{P}{\mathalpha}{operators}{`P}%
\DeclareMathSymbol{Q}{\mathalpha}{operators}{`Q}%
\DeclareMathSymbol{R}{\mathalpha}{operators}{`R}%
\DeclareMathSymbol{S}{\mathalpha}{operators}{`S}%
\DeclareMathSymbol{T}{\mathalpha}{operators}{`T}%
\DeclareMathSymbol{U}{\mathalpha}{operators}{`U}%
\DeclareMathSymbol{V}{\mathalpha}{operators}{`V}%
\DeclareMathSymbol{W}{\mathalpha}{operators}{`W}%
\DeclareMathSymbol{X}{\mathalpha}{operators}{`X}%
\DeclareMathSymbol{Y}{\mathalpha}{operators}{`Y}%
\DeclareMathSymbol{Z}{\mathalpha}{operators}{`Z}%
\renewcommand{\leq}{\leqslant}
\renewcommand{\geq}{\geqslant}
\def\setminus{\mathchoice
    {\mathbin{\vrule height .72ex width 1.61ex depth -.38ex}}
    {\mathbin{\vrule height .72ex width 1.61ex depth -.38ex}}
    {\mathbin{\vrule height .50ex width 0.85ex depth -.28ex}}
    {\mathbin{\vrule height .20ex width 0.570ex depth -.24ex}}
}
\newcommand{\Cc}{\mathbf{C}}
\newcommand{\Zz}{\mathbf{Z}}
\newcommand{\Pp}{\mathbf{P}}
\newcommand{\Gg}{\mathbf{G}}
\newcommand{\Ff}{\mathbf{F}}
\newcommand{\mcO}{\mathscr{O}}
\def\loccit{loc.\kern3pt cit.{}\xspace}
\def\cf{see\kern.3em}
\def\Cf{See\kern.3em}
\def\eg{e.g.\kern.3em}
\def\ie{i.e.,\ }
\def\resp{\text{resp.}\kern.3em}
\newcommand{\mods}[1]{\,(\mathrm{mod}\,{#1})}
\DeclareMathOperator{\mfm}{\mathfrak{m}}
\renewcommand{\div}{\mathrm{div}}
\DeclareMathOperator{\Div}{Div}
\DeclareMathOperator{\Gal}{Gal}
\DeclareMathOperator{\Res}{Res}
\renewcommand{\rho}{\varrho}
\DeclareMathOperator{\GL}{\mathbf{GL}}
\DeclareMathOperator{\PGL}{\mathbf{PGL}}
\newcommand{\demi}{{\textstyle{\frac{1}{2}}}}
\DeclareMathSymbol{\gena}{\mathord}{letters}{"3C}
\DeclareMathSymbol{\genb}{\mathord}{letters}{"3E}
\theoremstyle{plain}
\newtheorem{theorem}{Theorem}
\newtheorem*{theorem*}{Theorem}
\newtheorem{lemma}[theorem]{Lemma}
\newtheorem{proposition}[theorem]{Proposition}
\theoremstyle{remark}
\theoremstyle{definition}
\newtheorem{remark}[theorem]{Remark}
\newcommand{\mcL}{\mathscr{L}}
\renewcommand{\geq}{\geqslant}
\renewcommand{\leq}{\leqslant}
\begin{document}

\title{Sidon sets in algebraic geometry}

\author{Arthur Forey}
\address[A. Forey]{Univ. Lille, CNRS, UMR 8524 - Laboratoire Paul Painlevé, F-59000 Lille, France} 
  \email{arthur.forey@univ-lille.fr}

\author{Javier Fres\'an}
\address[J. Fres\'an]{CMLS, \'Ecole polytechnique, F-91128 Palaiseau cedex, France}
\email{javier.fresan@polytechnique.edu}

\author{Emmanuel Kowalski}
\address[E. Kowalski]{D-MATH, ETH Z\"urich, R\"amistrasse 101, CH-8092 Z\"urich, Switzerland} 
\email{kowalski@math.ethz.ch}

\subjclass[2010]{14H40, 14L10, 05B10, 11B30}

\keywords{Sidon set, symmetric Sidon set, algebraic curve, generalized
  jacobian}

\begin{abstract}
  We report new examples of Sidon sets in abelian groups arising from
  generalized jacobians of curves, and discuss some of their properties
  with respect to size and structure.
\end{abstract}

\maketitle 

\section{Introduction}

Let $A$ be an abelian group. A subset $S$ of $A$ is called a \emph{Sidon set} if $S$ does not contain non-trivial additive quadruples; that is, if any solution $(x_1,x_2,x_3,x_4)\in S^4$ of the equation
\begin{equation}\label{eq-sidon}
  x_1+x_2=x_3+x_4
\end{equation}
satisfies $x_1\in\{x_3,x_4\}$ (see,
e.g.,~\cite[\S\,1]{eberhard-manners}). In other words, up to
transposition an element of~$A$ is in at most one way the sum of two
elements of~$S$.

We will explain how to construct a range of new examples of Sidon sets
using the theory of commutative algebraic groups. In fact, we sometimes
most naturally obtain a slight variant: given an element~$a$ of~$A$, we
say that a subset $S$ of~$A$ is a \emph{symmetric Sidon set with
  center~$a$} if $S=a-S$ and the solutions to equation \eqref{eq-sidon}
satisfy $x_1\in\{x_3,x_4\}$ or~$x_2=a-x_1$ (we will explain in
Remark~\ref{rm-center} that the center is unique if~$S$ is not
empty). Choosing (arbitrarily) one element of $\{x,a-x\}$ as $x$ varies
over elements of~$S$ with $2x\not=a$ leads to a Sidon set of size
about~$|S|/2$ if $S$ is finite and~$A$ is without $2$-torsion, but there
is usually no natural choice.

\begin{theorem}\label{th-classification}
  Let $k$ be a field and let~$C$ be a smooth projective geometrically connected curve of genus~$g$ over~$k$. Let~$\mfm$ be an
  effective divisor on~$C$ and $J_{\mfm}$ the associated generalized jacobian, which is a commutative algebraic group of dimension $g+\max(\deg(\mfm)-1,0)$.  Let~$\delta$ be a divisor of degree~$1$
  on~$C$ whose support does not intersect that of~$\mfm$.
  Let~$s\colon C\setminus\mfm\to J_{\mfm}$ be the
  morphism induced by the map $x\mapsto (x)-\delta$ on
  divisors.
 
If $\dim(J_{\mfm})\geq 2$, then $s((C\setminus \mfm)(k))$ is either a Sidon set or a
  symmetric Sidon set in $J_{\mfm}(k)$. 

If, moreover, $(C\setminus\mfm)(k)$ is non-empty, then it is a symmetric Sidon set if and only if one of the following conditions
  hold:
  \begin{enumth}
  \item $g=1$ and~$\deg(\mfm)=2$; in this case, writing $\mfm=(p)+(q)$
    (where~$p$ and~$q$ are not necessarily $k$-points of $C$, but the
    divisor $\mfm$ is assumed to be defined over~$k$), the center
  of~$s((C\setminus \mfm)(k))$ is the common value of $s(x)+s(p+q-x)$
  for any $x\in (C\setminus \mfm)(k)$.
  \item $g\geq 2$, the curve~$C$ is hyperelliptic, and either
    $\deg(\mfm)\leq 1$ or $\mfm=(p)+(i(p))$ for
    some~$p\in C$, where~$i$ is the hyperelliptic involution on $C$. In both of these
    cases, the center of~$s((C\setminus \mfm)(k))$ is the common
    value of $s(x)+s(i(x))$ for any $x\in (C\setminus \mfm)(k)$.
  \end{enumth}
\end{theorem}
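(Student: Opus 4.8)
The plan is to translate the Sidon condition into a statement about linear equivalence of degree-$2$ divisors relative to the modulus, and then to control that equivalence by a single Riemann--Roch estimate. Since $s(P)=[(P)-\delta]$, an additive quadruple $s(P_1)+s(P_2)=s(P_3)+s(P_4)$ in $J_{\mfm}(k)$ is exactly the relation $(P_1)+(P_2)\sim_{\mfm}(P_3)+(P_4)$, where $\sim_{\mfm}$ denotes equivalence in the generalized jacobian: there is a rational function $f$ with $\div(f)=(P_1)+(P_2)-(P_3)-(P_4)$ and $f\equiv 1\pmod{\mfm}$. Thus $S$ is Sidon precisely when every such relation is \emph{trivial} (i.e.\ $\{P_1,P_2\}=\{P_3,P_4\}$), and the entire theorem reduces to describing the non-trivial relations. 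I would also note at the outset that the empty case is automatic, so for the dichotomy one may assume $(C\smallsetminus\mfm)(k)\neq\varnothing$.

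The first key step I would isolate is a degree bound. If a non-trivial relation holds, then $f$ is non-constant and lies in $L((P_3)+(P_4))$; as the constants lie there too, $f-1\in L((P_3)+(P_4))\smallsetminus\{0\}$. Being non-constant, $f-1$ has as many zeros as poles, at most $\deg((P_3)+(P_4))=2$ of each; yet $f\equiv 1\pmod{\mfm}$ forces $\div(f-1)_0\geq\mfm$. Hence $\deg(\mfm)\leq 2$, so $S$ is automatically Sidon as soon as $\deg(\mfm)\geq 3$; this disposes of all genera at once, including $g=0$. I expect this small observation to be the cleanest and most load-bearing point of the argument.

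It remains to treat $\deg(\mfm)\leq 2$ under the standing hypothesis $\dim(J_{\mfm})=g+\max(\deg(\mfm)-1,0)\geq 2$. When $\deg(\mfm)=2$ the same count shows $\div(f-1)=\mfm-(P_3)-(P_4)$, so $(P_3)+(P_4)\sim\mfm$; thus every pair in a non-trivial relation lies in the linear system $|\mfm|$, and conversely any two members of $|\mfm|$ disjoint from $\supp(\mfm)$ are $\mfm$-equivalent (take $f=1+cu$ with $\div(u)=\mfm-(P_3)-(P_4)$, so $u\equiv 0\pmod{\mfm}$). Non-trivial relations therefore exist if and only if $\ell(\mfm)\geq 2$, and then all pairs of $|\mfm|$ share one value $a=s(x)+s(\sigma(x))$, where $\sigma$ pairs the two points of a member of the pencil. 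By Riemann--Roch a degree-$2$ divisor has $\ell\geq 2$ exactly when $g\leq 1$, or $g\geq 2$ and the divisor is a $g^{1}_{2}$; for $g\geq 2$ the latter means $C$ is hyperelliptic and $\mfm=(p)+(i(p))$, with $\sigma=i$. For $g=1$ every degree-$2$ class moves, and $\sigma(x)=p+q-x$. When instead $\deg(\mfm)\leq 1$ the local part of $J_{\mfm}$ vanishes, so $J_{\mfm}=J$ and $\sim_{\mfm}$ is ordinary equivalence; non-trivial relations then come precisely from a $g^{1}_{2}$, i.e.\ (as $g\geq 2$) from $C$ hyperelliptic, again collapsing to a single value. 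This reproduces cases (1) and (2) and the centers $s(x)+s(p+q-x)$ and $s(x)+s(i(x))$.

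Finally I would assemble the dichotomy: in every case the non-trivial relations, if present, are governed by the single involution $\sigma$ with $s(x)+s(\sigma(x))=a$, so that $S=a-S$ and the only non-trivial quadruples are the symmetric ones $x_2=a-x_1$; absent such relations $S$ is Sidon. The step I expect to demand the most care is this last verification of the \emph{exact} symmetric-Sidon structure, namely that $\sigma$ preserves $(C\smallsetminus\mfm)(k)$ so that $S=a-S$ holds on the nose. This is immediate when $\mfm=(p)+(i(p))$ or in the genus-$1$ degree-$2$ case, since $\sigma$ permutes $\supp(\mfm)$; the genuinely delicate point is the degree-$1$ modulus, where $\sigma$-stability of $\supp(\mfm)$ amounts to a Weierstrass condition and one must use that $s$ extends over $\supp(\mfm)$ because $J_{\mfm}=J$. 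Verifying uniqueness of the center via Remark~\ref{rm-center} then closes the argument.
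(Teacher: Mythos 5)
Your core argument is sound, and it takes a genuinely different route from the paper's. The paper reduces, via the functoriality of generalized jacobians for $\mfm\geq\mfm'$, to five cases, which it settles by explicit rational-function computations for $g=0$ (three $\PGL_2$-orbits of moduli), by plane-cubic line geometry (Lemma~\ref{lm-geometry}) for $g=1$, $\deg(\mfm)=2$, and by the uniqueness of the $g^1_2$ in the hyperelliptic case. Your uniform substitute is correct and cleaner at several points: a non-trivial quadruple produces a non-constant $f$ with $f-1\in L((P_3)+(P_4))$, forcing $\deg \div(f-1)_0\leq 2$ and hence $\deg(\mfm)\leq 2$, which disposes in one stroke of everything the paper handles in case (1), case (3) and the $\deg(\mfm)\geq 3$ part of case (5); and for $\deg(\mfm)=2$ the exact equality $\div(f-1)=\mfm-(P_3)-(P_4)$ correctly places all pairs in the pencil $|\mfm|$, with your converse $f=1+cu$ valid since every member of $|\mfm|$ other than $\mfm$ itself is of the form $\div(1+cu)_0$. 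Your Riemann--Roch classification of degree-$2$ divisors with $\ell\geq 2$ then recovers exactly the dichotomy of the theorem. What the paper's route buys instead is the explicit identification with the five classical Sidon sets and the parallel-lines picture it reuses in the later analytic and connection-theoretic proofs. One small omission on your side: like the paper, you should first reduce to $\bar{k}$ (the relation in $J_{\mfm}(k)$ only directly yields $f$ over $\bar{k}$) and check that the center descends to $J_{\mfm}(k)$; this is a two-line argument given at the start of the paper's proof section.

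The point you flag as delicate is, however, a genuine problem, and your proposed fix does not close it. For $\mfm=(p)$ of degree $1$, the observation that ``$s$ extends over $\supp(\mfm)$ because $J_{\mfm}=J$'' does not change the set in question: under $J_{\mfm}\simeq J$, the set $S=s((C\setminus\mfm)(k))$ is the full Abel--Jacobi image with the single element $s(p)$ removed. If $p$ is not a Weierstrass point, then $s(i(p))\in S$ while $a-s(i(p))=s(p)\notin S$, so the exact symmetry $S=a-S$ fails; what remains true is only that every non-trivial quadruple has the symmetric shape $x_2=a-x_1$. So your ``Weierstrass condition'' diagnosis is exactly right, but no appeal to the extension of $s$ makes $S=a-S$ hold when $i(p)\neq p$. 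You should know that this is a defect inherited from an edge case of the statement rather than a missing idea recoverable from the paper: the paper's written proof of case (5) treats only $\mfm=0$ and $\deg(\mfm)\geq 2$, and its reduction step transfers only the plain Sidon property, so the hyperelliptic $\deg(\mfm)=1$ subcase is not verified there either — your framework has the merit of making the subtlety visible. A clean repair is to prove the exact symmetric-Sidon conclusion only when $\supp(\mfm)$ is $i$-stable (which covers $\mfm=0$, $\mfm=(p)$ with $p$ Weierstrass, and $\mfm=(p)+(i(p))$), and to record the remaining degree-$1$ case as a symmetric Sidon set with one point removed.
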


A concrete description of the abelian groups $J_{\mfm}(k)$ will be presented in Section~\ref{sec-jacobians}. 

Figure~\ref{fig-quadruple} illustrates the symmetric Sidon set obtained from a curve~$C$ of genus~$1$, viewed
  as a plane cubic curve, and a divisor $\mfm=(p)+(q)$ supported on two distinct $k$-points. It
  displays a configuration of points $(x_1,\ldots,x_4) \in C(k) \setminus \{p, q\}$ such that
  $(s(x_1),\ldots,s(x_4))$ is a non-trivial additive quadruple in $J_{\mfm}(k)$. 

The case $\mfm=0$ of Theorem~\ref{th-classification} was
  known to N. Katz (see Section~\ref{sec-appl}).
    
  \begin{figure}
      \includegraphics[width=3in]{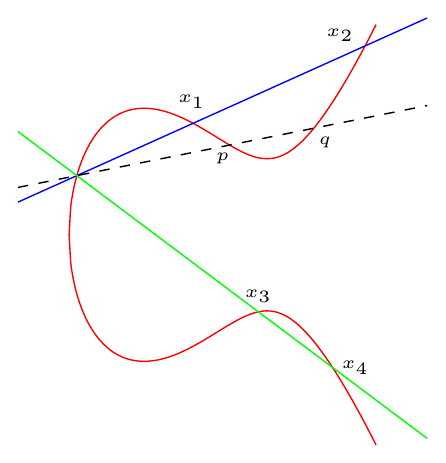}
    \caption{A non-trivial additive quadruple for
      $J_{(p)+(q)}$}\label{fig-quadruple}
  \end{figure}

Before proving Theorem~\ref{th-classification}, we will recall the
definitions of the generalized jacobians, and comment on these examples
of Sidon sets in comparison with the current literature. We also briefly
survey some surprising applications of Sidon sets in algebraic geometry.

\begin{remark}\label{rm-center}
  (1) It is possible that $S$ be both a Sidon set and a symmetric
  Sidon set, but this only happens if~$S$ is empty or if $S=\{x,a-x\}$
  for some $a$ and~$x$ in $A$.

  (2) Let~$S\subset A$ be a non-empty symmetric Sidon set. We claim
  that its center is unique. Indeed, this is straightforward to check if $S$ is of the form $\{x, a-x\}$. Otherwise, $S$ is not a Sidon set, so that there exist elements $x_1, \ldots, x_4 \in S$ such that $x_1+x_2=x_3+x_4$ and $x_1 \notin \{x_3, x_4\}$. Then \textit{any} center $a$ is equal to $x_1+x_2$, and hence all centers are equal.  
  \end{remark}

\subsection*{Notation}
\label{sec-conventions}

Given complex-valued functions $f$ and $g$ defined on a set $S$, we
write $f \ll g$ if there exists a real number $C \geq 0$ (which is then called an
``implicit constant'') such that the inequality $|f(s)|\leq C g(s)$
holds for all $s \in S$.

\subsection*{Acknowledgments} We thank K. Soundararajan for pointing out
to us the terminology ``Sidon sets'', and S. Eberhard and F. Manners for
sharing with us their work~\cite{eberhard-manners} on dense Sidon
sets. We also thank C. Bortolotto for pointing out a small slip when
passing from symmetric Sidon sets to Sidon sets. We thank very warmly
the referee, whose highly perspicuous report was a model of the
kind. During the preparation of this work, A.\,F. was supported by the
SNF Ambizione grant PZ00P2\_193354 and J.\,F. was partially supported by
the grant ANR-18-CE40-0017 of the Agence Nationale de la Recherche.

\section{Generalized jacobians}\label{sec-jacobians}

In this preliminary section, we briefly recall the group structure on the set of points of generalized jacobians (see Serre's
book~\cite{serre} for a complete account of the theory, in particular for the further
structure of algebraic group that they carry).

Let~$k$ be an algebraically closed field.  Let~$C$ be a smooth
projective geometrically connected algebraic curve over~$k$. The
group~$\Div(C)$ of \emph{divisors} on~$C$ is the free abelian group with
basis given by the $k$-points of~$C$; we denote by $(p)$ the basis
element corresponding to~$p\in C(k)$.

The \emph{degree}~$\deg(D)$ of a divisor $D=\sum n_p (p)$ is the sum $\sum n_p$ of the coefficients in its expression as a $\Zz$\nobreakdash-linear combination of the basis elements~$(p)$.  One
views~$\Div(C)$ as an ordered abelian group, with~$D\geq 0$ if and only
if all coefficients are non-negative integers (in which case $D$ is
called an \emph{effective} divisor).  The \emph{support} $|D|$ of a divisor~$D$ is the set
of~$p\in C(k)$ such that $n_p$ is non-zero, and two
divisors are said to be \emph{coprime} if they have disjoint supports. We write
$C\setminus D$ for the open subvariety of~$C$ obtained by removing~$|D|$.

If $f\colon C\to \Pp^1$ is a non-constant function, the divisor
$\div(f)$ of~$f$ is defined to be the sum of~$p\in C(k)$ such that $f(p)=0$, with multiplicity, minus the sum of $p\in C(k)$ such that~$f(p)=\infty$, with multiplicity. It is a standard fact that
$\deg(\div(f))=0$. 

Fix an effective divisor $\mfm=\sum n_p (p)$ on~$C$, called a
\emph{modulus}. Let~$\Div_{0,\mfm}(C)$ be the subgroup of
$\Div(C)$ consisting of divisors of degree~$0$ which are coprime to~$\mfm$. This contains a subgroup $P_{\mfm}(C)$ whose
elements are the divisors of non-constant functions~$f$ satisfying~$v_p(f-1)\geq n_p$ for every~$p$ in the support of~$\mfm$ with multiplicity~$n_p\geq 1$, where~$v_p$ is the valuation at~$p$ (order of zero or pole at~$p$). In particular, the divisor of such a function is coprime to $\mfm$.

The group of $k$-points of the \emph{generalized jacobian} associated to~$C$ and~$\mfm$ is then defined as
\[
J_{\mfm}(k)=\Div_{0,\mfm}(C)/P_{\mfm}(C).
\]

The special case of the trivial modulus $\mfm=0$ is particularly important: the
corresponding quotient is the group of $k$-points of the classical jacobian variety $J$ of~$C$, which is an abelian variety over~$k$; its dimension is the \emph{genus}~$g$ of~$C$. In general, $J_{\mfm}(k)$ is the group of $k$\nobreakdash-points of a commutative algebraic group $J_{\mfm}$ of dimension
$g+\max(\deg(\mfm)-1,0)$ over $k$ (see \hbox{\cite[V.1.3\,and\,V.1.6]{serre}}). It  fits into an extension
\begin{equation}\label{eqn:extensionjacobian} 
0 \longrightarrow \bigg(\prod_{p \in |\mathfrak{m}|} \mathrm{U}_p / \mathrm{U}_p ^{(n_p)} \bigg)\bigg/ \Delta \longrightarrow J_{\mfm}(k) \longrightarrow J(k) \longrightarrow 0, 
\end{equation} where $\mathrm{U}_p$ denotes the multiplicative group
of functions that do not vanish at $p$ (units), $\mathrm{U}_p^{(n_p)}$
the subgroup of those satisfying $v_p(f-1) \geq n_p$, and $\Delta$ the
diagonal subgroup of non-zero constant functions $(\lambda, \dots,
\lambda)$. According to~\cite[V.3.15\,and\,V.3.16]{serre}, each group
$\mathrm{U}_p / \mathrm{U}_p ^{(n_p)}$ is isomorphic to $k^\times
\times k^{n_p-1}$ if $k$ has characteristic zero, and to $k^\times
\times \prod W_{r_i}(k)$ if $k$ has positive characteristic $\ell$, where the product runs over integers $1 \leq i \leq n_p-1$ coprime to $\ell$ and $W_{r_i}(k)$ are the $\ell$-typical Witt vectors of length the smallest integer $r_i$ satisfying $\ell^{r_i} \geq n_p/i$. 

\begin{remark}
  The formula for the dimension of the generalized jacobian shows that the condition $\dim(J_{\mfm})\geq 2$
  of Theorem~\ref{th-classification} only excludes the cases where $g=0$
  and~$\deg(\mfm)\leq 2$ or~$g=1$
  and~$\deg(\mfm)\leq 1$. The corresponding generalized
  jacobians are as follows:
  \begin{itemize}
    \item If~$g=0$ and~$\deg(\mfm)\leq 1$, then $C$ is the
      projective line and~$J_{\mfm}$ is the trivial group.
    \item If~$g=0$ and~$\deg(\mfm)=2$, then $C$ is the
      projective line and~$J_{\mfm}$ is either the additive
      group (if $\mfm$ is a single point with multiplicity~$2$)
      or the multiplicative group over~$k$ (if~$\mfm$ consists of two
      points).
    \item If~$g=1$ and~$\deg(\mfm)\leq 1$, then~$C$ can be
      identified with an elliptic curve after fixing an origin,
      and~$J_{\mfm}$ is isomorphic to this elliptic curve.
  \end{itemize}
\end{remark}

We have now described the groups appearing in
Theorem~\ref{th-classification} when~$k$ is algebraically
closed. If $k$ is only perfect (\eg a finite field), we can define $J_{\mfm}(k)$ by Galois descent. Namely, we fix an algebraic closure $\bar{k}$ of $k$, and consider the action of the absolute Galois group $\mathrm{Gal}(\bar{k}/k)$ on $C(\bar{k})$, which extends by linearity to an action on the group $\Div(C_{\bar{k}})$ of divisors over $\bar{k}$. We define the group $\Div(C)$ of divisors over $k$ as the fixed points under this action. Given a modulus~$\mfm$ on $C$, \ie an effective divisor over $k$, the subgroups $\Div_{0, \mfm}(C_{\bar{k}})$ and $P_{\mfm}(C_{\bar{k}})$
are stable under the action of $\Gal(\bar{k}/k)$, hence an induced action on the quotient $J_{\mfm}(\bar{k})$. The group $J_{\mfm}(k)$ is then the subgroup of~$J_{\mfm}(\bar{k})$ consisting of the fixed points under the Galois group. The most interesting case from the point of view of classical Sidon set theory is that of finite field $k$, in which the above construction amounts to considering fixed points under the Frobenius automorphism $x\mapsto x^{|k|}$ of~$\bar{k}$. Then $J_{\mfm}(k)$ is a finite group.

\begin{remark} Examples of divisors over $k$ include, of course, linear combinations of $k$-points of $C$, but a
  $k$-divisor of~$C$ is \emph{not necessarily} of this form. For instance, for~$C=\Pp^1$ and a non-zero polynomial~$f\in k[X]$, the divisor of~$f$ is always a $k$-divisor on~$C$, although in
  general not all roots of~$f$ belong to~$k$. This simple remark will
  play a role in the next section.
\end{remark}

\section{Algebraic Sidon sets over finite fields}\label{sec-remarks}

A classical problem in additive combinatorics is to construct large
Sidon subsets of finite groups. Thus, while it is enough to prove
Theorem~\ref{th-classification} for algebraically closed fields, we
are particularly interested in the case where~$k$ is finite, and hence
$J_{\mathfrak{m}}(k)$ is a finite group.  We therefore investigate the
size and apparent structure of the finite Sidon sets we have
constructed, assuming that~$k$ is a finite field with a fixed
algebraic closure $\bar{k}$.

Given a curve~$C$ of genus $g$ and a modulus~$\mfm$ over~$k$
satisfying $g+\max(\deg(\mfm)-1,0) \geq 2$, the morphism
$s \colon C\setminus \mfm \to J_{\mfm}$ is an embedding, so that
Theorem~\ref{th-classification} provides a Sidon set or a symmetric
Sidon set~$S=s((C\setminus\mfm)(k))$ of size $|(C\setminus \mfm)(k)|$
in the abelian group~$A=J_{\mfm}(k)$. On the one hand, from the
Hasse--Weil bound on the number of points of curves over finite
fields~\cite{weil}, one gets the estimate
\[
  |k|-2g\sqrt{|k|}+1-\deg(\mfm) \leq |S|\leq |k|+2g \sqrt{|k|}+1.
\]

On the other hand, the extension structure
\eqref{eqn:extensionjacobian} of $J_{\mfm}(k)$ along with the Riemann
hypothesis for abelian varieties and tori over finite fields, in the
form of the estimates
\[
  (\sqrt{|k|}-1)^{2g} \leq |J(k)|\leq (\sqrt{|k|}+1)^{2g}, \quad
  (|k|-1)^d \leq |T(k)| \leq (|k|+1)^d
\]
for a $d$-dimensional torus $T$ (see \eg \cite[Thm.\,19.1]{milne-ab}
and \cite[Prop.\,3.3.5]{carter}), yield
\[
  (|k|-1)^{\max(\deg(\mfm)-1,0)}(\sqrt{|k|}-1)^{2g}\leq |A|\leq
  (\sqrt{|k|}+1)^{2g} (|k|+1)^{\max(\deg(\mfm)-1,0)}.
\]

Thus, when~$|k|$ is large, the set $S$ has size about
$|A|^{1/\dim(J_{\mfm})}$. The densest sets will therefore appear
when $\dim(J_{\mfm})=2$. This happens in the following cases:
\begin{enumerate}
\item $g=0$ and $\deg(\mfm)=3$;
\item $g=1$ and $\deg(\mfm)=2$;
\item $g=2$ and $\deg(\mfm)\leq 1$.
\end{enumerate}

Note that in the second and third cases, Theorem~\ref{th-classification}
also states that we obtain a \emph{symmetric} Sidon set, and not a Sidon
set (because any curve of genus~$2$ is
hyperelliptic~\cite[Prop.~4.9]{Liu}), so that we get from this
construction Sidon sets of size about~$\sqrt{|A|}/2$ by
``desymmetrizing'' (using the fact that the size of the $2$-torsion
group of $J_{\mfm}(k)$ is bounded by $2^4$ in all these cases).

We consider these three cases in turn.

(1) For $g=0$ and~$\deg(\mfm)=3$, the curve~$C$ is isomorphic to~$\Pp^1$
(since any non-degenerate quadratic form in three variables has a non-trivial zero by the Chevalley--Warning theorem; this would not necessarily be true over an arbitrary field).  We can
restrict our attention to a few special cases of~$\mfm$ using the action
of the automorphism group~$\PGL_2(k)$ on~$C(k)$, which induces an action
on the group of~$k$-divisors preserving multiplicities. 

\begin{lemma}
The action
  of~$\PGL_2(k)$ on effective $k$-divisors of degree~$3$ on~$\Pp^1$ has
  five orbits, represented by
  \begin{gather*}
    \mfm_1=(0)+(1)+(\infty),\quad\quad
    \mfm_2=(0)+2(\infty),\quad\quad
    \mfm_3=3(\infty),\\
    \mfm_4=(a)+(b)+(c),\quad\quad
    \mfm_5=(\alpha)+(\beta)+(\infty),
  \end{gather*}
  where $a,b,c$ are the roots of an irreducible monic cubic polynomial $f_3\in k[X]$,  and~$\alpha,\beta$ are the roots of an irreducible monic quadratic polynomial $f_2\in k[X]$.  
\end{lemma}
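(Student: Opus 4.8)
The plan is to first show that $\PGL_2(k)$ preserves the combinatorial type of an effective $k$-divisor, and then to verify transitivity on each type separately, the two types with non-rational support being the real content. Write $q=|k|$. An effective $k$-divisor of degree~$3$ is a formal sum $\sum_P n_P[P]$ over closed points~$P$ of~$\Pp^1$ with $\sum_P n_P\deg(P)=3$; since $\PGL_2(k)$ acts by automorphisms defined over~$k$, it commutes with the Galois action, hence sends closed points to closed points of the same degree while preserving multiplicities. Consequently it preserves the multiset of pairs $(\deg(P),n_P)$, and the five representatives $\mfm_1,\dots,\mfm_5$ realize exactly the five such multisets compatible with $\sum_P n_P\deg(P)=3$, namely $\{(1,1),(1,1),(1,1)\}$, $\{(1,1),(1,2)\}$, $\{(1,3)\}$, $\{(3,1)\}$ and $\{(1,1),(2,1)\}$. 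This simultaneously shows that the five divisors lie in distinct orbits and that every effective $k$-divisor of degree~$3$ has one of these five types, so it remains only to prove transitivity within each type.

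For $\mfm_1$, $\mfm_2$ and $\mfm_3$, which involve only $k$-rational points, I would invoke the fact that $\PGL_2(k)$ acts sharply $3$-transitively on $\Pp^1(k)$ (indeed $|\PGL_2(k)|=(q+1)q(q-1)$ is precisely the number of ordered triples of distinct points of $\Pp^1(k)$). Three distinct rational points can thus be moved to $(0,1,\infty)$; a pair $(P,Q)$ of distinct rational points can be moved to $(\infty,0)$, sending $2(P)+(Q)$ to $\mfm_2$; and a single rational point can be moved to~$\infty$, sending $3(P)$ to $\mfm_3$.

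The substance lies in the two types with non-rational support, and the key observation is that $\PGL_2(k)$ acts \emph{freely} on points with large field of definition: any nontrivial element of $\PGL_2(k)$ has at most two fixed points on~$\Pp^1$, and these are governed by a quadratic characteristic polynomial, hence are defined over $\Ff_{q^2}$. For $\mfm_4$, a degree-$3$ closed point is a Frobenius orbit inside $X=\Pp^1(\Ff_{q^3})\setminus\Pp^1(\Fq)$, of cardinality $(q^3+1)-(q+1)=q^3-q=|\PGL_2(k)|$; since $\Ff_{q^2}\cap\Ff_{q^3}=\Fq$, no nontrivial element fixes a point of~$X$, so the action on~$X$ is free, and the equality of cardinalities forces it to be simply transitive. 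As $\PGL_2(k)$ commutes with Frobenius it maps Frobenius orbits to Frobenius orbits, hence is transitive on degree-$3$ closed points. For $\mfm_5$, I would first move the rational point of the divisor to~$\infty$, reducing to a divisor $P'+(\infty)$ with $P'$ a closed point of degree~$2$; the stabilizer of~$\infty$ is the affine group $\{x\mapsto ax+b:a\in\Fqt,\ b\in\Fq\}$ of order $q(q-1)$, whose nontrivial elements fix no point of $\Ff_{q^2}\setminus\Fq$ (their affine fixed points, when they exist, lie in~$\Fq$). Since $|\Ff_{q^2}\setminus\Fq|=q^2-q=q(q-1)$, the affine group acts simply transitively there, and therefore transitively on degree-$2$ closed points. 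Composing the two steps carries any type-$5$ divisor to $\mfm_5$.

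The main obstacle is precisely this last pair of cases: the naive strategy of moving the conjugate points one at a time is unavailable because those points are not $k$-rational. The clean resolution is the numerical coincidence $|\PGL_2(k)|=|X|$ (respectively $|\mathrm{Aff}(k)|=|\Ff_{q^2}\setminus\Fq|$) combined with freeness of the action, which upgrades a potential single orbit into an exact simple transitivity.
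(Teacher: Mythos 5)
Your proof is correct and follows essentially the same route as the paper's: sharp $3$-transitivity of $\PGL_2(k)$ on $\Pp^1(k)$ for the three rational-support types, and, for the two non-rational types, freeness of the action (fixed points of a nontrivial element satisfy a quadratic over~$k$, hence lie in $\Pp^1(\Ff_{q^2})$, disjoint from the cubic and quadratic loci) combined with the cardinality count $|\PGL_2(k)|=q^3-q$, which is exactly the paper's ``stabilizers given by the center'' plus $|\GL_2(k)|=|k^\times|\,|k_3\setminus k|$ argument in $\PGL_2$-form. If anything you are slightly more complete than the paper: you make explicit that the five representatives lie in pairwise distinct orbits (via invariance of the multiset of pairs $(\deg P, n_P)$) and you spell out the $\mfm_5$ case via the stabilizer of~$\infty$ acting simply transitively on $\Ff_{q^2}\setminus\Fq$, both of which the paper leaves implicit or compresses into a one-line appeal to transitivity of $\GL_2(k)$ on~$k_2$.
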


\begin{proof}
  Since~$\PGL_2(k)$ acts $3$-transitively on~$\Pp^1(k)$, the orbits of
  the divisors $\mfm_1$, $\mfm_2$ and~$\mfm_3$ are, respectively, the
  divisors whose support is contained in~$\Pp^1(k)$ and consists of
  three distinct points, two points, or a single point.

  Now let~$\mfm$ be an effective $k$-divisor of degree~$3$ such that at least one
  point~$x$ of the support of~$\mfm$ does not belong to~$\Pp^1(k)$. Since, for any such~$x$, all its Galois conjugates are
  also in the support with the same multiplicity (because $\mfm$ is
  Galois-invariant), we see that~$x$ must generate either the
  extension~$k_3$ of degree~$3$ of~$k$ in~$\bar{k}$, or the extension $k_2$ of
  degree~$2$.

  In the former case, $\mfm$ is equal to
  $(x)+(x^{|k|})+(x^{|k|^2})$. But there exists~$\gamma\in \PGL_2(k)$
  such that~$\gamma(x)=a$ (because $\GL_2(k)$ acts on $k_3\setminus k$
  with stabilizers given by the center, so acts transitively since
  $|\mathbf{GL}_2(k)|=|k^{\times}|\, |k_3\setminus k|$) and then
  $\gamma(\mfm)=\mfm_4$ since the Frobenius commutes with~$\gamma$.

  Finally, if~$x$ generates the quadratic extension $k_2$, then $\mfm$ is of the form 
  $(x)+(x^{|k|})+(y)$ for some $y \in \Pp^1(k)$, and from the transitivity of the action of~$\PGL_2(k)$ on~$\Pp^1(k)$ and that of $\GL_2(k)$ on $k_2$ one sees that $\mfm$ and $\mfm_5$ lie in the same orbit. 
\end{proof}

We obtain this way five Sidon sets~$S_i\subset A_i=J_{\mfm_i}(k)$ for
$1\leq i\leq 5$, of sizes
$$
|S_1|=|k|-2,\quad |S_2|=|k|-1,\quad |S_3|=|k|,\quad |S_4|=|k|+1,\quad
|S_5|=|k|.
$$
\par
Moreover, one can easily check that there are isomorphisms of abelian
groups
\begin{gather*}
  A_1\simeq (k^{\times})^2,\quad A_2\simeq k^{\times}\times k, \\
A_3\simeq k^2,\text{ if $k$ has characteristic $\geq 3$}\\
A_3\simeq W_2(k),\text{ if $k$ has characteristic $2$},\\
A_4\simeq k_3^{\times}/k^\times,\quad A_5\simeq k_2^{\times},
\end{gather*}
where $k_3$ and~$k_2$ are respectively the cubic and quadratic extensions
of~$k$ inside $\bar{k}$ and~$W_2(k)$ is the group of Witt vectors of length~$2$ (which in characteristic~$2$ is a non-trivial extension of~$k$ by~$k$;
see~\cite[V.16]{serre}). The groups $A_4$ and~$A_5$ appear as the groups
of $k$-points of the $2$-dimensional non-split $k$-tori $J_{\mfm_4}\simeq \Res_{k_3 /k}(\Gg_m)/\Gg_m$
and~$J_{\mfm_5}\simeq \Res_{k_2/k}(\Gg_m)$.

It is not difficult to see further that $S_1$, $S_2$, $S_3$ can be
identified, respectively, with
\begin{gather*}
  S_1=\{(x,1-x)\in (k^{\times})^2\,\mid\, x\in k^{\times},\ x\not=1\},
  \\
  S_2=\{(x,x)\in k^{\times}\times k\,\mid\, x\in k^{\times}\},\\
  S_3=\{(x,x^2)\in k^2\,\mid\, x\in k\},\text{ if $k$ has
    characteristic~$\geq 3$}
\end{gather*}
(see~\cite[Rem.\,9.13\,(2)]{ffk}).  These are very classical examples
of Sidon sets; they appear in the paper~\cite{eberhard-manners} of
Eberhard and Manners as Constructions 5, 4 and~1, respectively, and are
due to Erd\H os--Turán, Spence and Hugues (with~$S_2$ also discovered
independently by Ruzsa and~$S_3$ by Cilleruelo). One can also check that $S_4$ and~$S_5$ correspond to Constructions 2
and~3 of \loccit, which are due to Singer and
Bose, respectively.

All these are Sidon sets of size approximately $\sqrt{|A|}$. Thus, we recover
``uniformly'' the five main examples of dense Sidon sets discussed by
Eberhard and Manners. This construction appears to be very different from their own uniform interpretation, where the groups~$A_i$ arise as maximal abelian subgroups of $\PGL_3(k)$ and the Sidon sets take the form
$$
S=\{g\in A\,\mid\, p\in g(\ell)\}
$$
for some line $\ell$ and some point~$p$ in~$\Pp^2(k)$
(see~\cite[\S\,2-3]{eberhard-manners}).

(2) For $g=1$, we have a curve of genus~$1$. It is classical that, over
a finite field, such a curve always has a $k$-rational point, and one
can take this as origin to view the curve as an elliptic curve. In
particular, the set $C(k)$ is then also a finite abelian group. The general structure of the generalized jacobians from \eqref{eqn:extensionjacobian} specializes in this case to a short exact sequence 
$$
0\to B\to J_{\mfm}(k)\to C(k)\to 0, 
$$
where the abelian group~$B$ is given by: 
\begin{align*}
  B&=k\text{ if } \mfm=2(p)\text{ for some } p\in C(k),\\
  B&=k^{\times}\text{ if } \mfm=(p)+(q)\text{ for some } p\not=q\text{
    in
  } C(k),\\
  B&=k_2^{\times}/k^{\times}\text{ if } \mfm=(p)+(q)\text{ for some Galois-conjugate }
  p\not=q\text{ in } C(k_2). 
\end{align*}
Since~$|k|-1\leq |B|\leq |k|+1$ (and $J_{\mfm}(k)$ has at most $8$
points of $2$-torsion), the desymmetrized Sidon sets have size about
$\sqrt{|A|}/2$.

(3) For $g=2$ and $\deg(\mfm)\leq 1$, the generalized jacobians are all
isomorphic to the classical jacobian of $C$, and again we obtain Sidon
sets of size about $\sqrt{|A|}/2$.

All these examples are rather dense Sidon sets. Cases (2) and (3) are
seemingly of a different type than previous examples, which is of
interest in the context of the existing speculation that ``sufficiently
large'' Sidon sets in finite abelian groups should have some kind of
algebraic structure (see \eg the blog post~\cite{gowers} of T. Gowers,
and the comments there). As already mentioned, Eberhard and
Manners~\cite{eberhard-manners} have classified in a uniform way all
known examples of Sidon sets $S$ with~$|S|\sim \sqrt{|A|}$ using finite
projective planes.  Our constructions show that there is a much wider
variety of examples of Sidon sets of size~$\sqrt{|A|}/2$ than previously
reported, and exhibit the following features which, to the best of our knowledge, were previously unknown: 

\begin{itemize}
\item Any classification of Sidon sets of size at least~$\sqrt{|A|}/2$
  will have to be sophisticated enough to account for jacobians of
  curves of genus~$2$ as well as generalized jacobians of dimension~$2$ coming from elliptic curves;
\item There are natural ``continuous'' families of Sidon sets of size
  about $\sqrt{|A|}/2$, up to ``isomorphism''. Namely, we note that
  the space of hyperelliptic curves of genus~$2$ over a given field~$k$,
  up to isomorphism, is three-dimensional; the space of elliptic curves
  is one-dimensional, each giving rise to a one-parameter family of
  generalized jacobians (for $\mfm=2(p)$, it is not difficult to
  see that all generalized jacobians are isomorphic as~$p$ varies, and
  for $\mfm=(p)+(q)$, one can check that~$J_{(p)+(q)}$ is
  isomorphic to~$J_{(0)+(q-p)}$). Although one might object that maybe
  distinct curves (as geometric objects) would give rise to
  ``isomorphic'' finite Sidon sets, this is certainly not the case, at
  least in a naive sense (e.g. because many different finite abelian
  groups arise as $J(k)$ for the jacobian $J$ of a curve of genus~$2$
  and a fixed large finite field $k$).
\item All our examples are obtained as the intersection of an
  \emph{infinite} Sidon set, namely
  \[
  s((C\setminus \mfm)(\bar{k}))\subset J_{\mfm}(\bar{k}),
  \] with the
  finite subgroup~$J_{\mfm}(k)$. (Note that this applies also to
  examples where the modulus is not defined over~$k$; for instance,
  this happens for the sets $S_4$ and~$S_5$ above, in which case this
  feature is not apparent from the classical constructions of Bose and
  Singer, or those of Eberhard--Manners.)

  An intriguing comparison suggests itself with infinite Sidon sets of
  integers, which are known to be \emph{less} dense in segments than the
  densest finite Sidon sets of integers. Indeed, a result of Erd\H os states
  that
  $$
  \liminf_{n\to +\infty} \frac{(\log N)^{1/2}}{N^{1/2}} |\{n\in S\,\mid\,
  0\leq n\leq N\}|<+\infty
  $$
  for any Sidon set~$S\subset \Zz$ (see,
  e.g.,~\cite[p.\,89,\,Th.\,8]{halberstam-roth}). The referee pointed out
  that the proof leads to a precise upper-bound $\leq \sqrt{40}$.
\end{itemize}

\section{Some finite abelian groups with large Sidon sets}

Babai and Sós~\cite[Th.\,4.2]{babai-sos} considered the problem of
finding ``large'' Sidon subsets in arbitrary finite abelian groups. It
should be noted before stating their results that they use a slightly
different definition of Sidon sets (see~\cite[Def.\,1.1]{babai-sos}),
which also allows for solutions of~(\ref{eq-sidon}) with
$x_1=x_2$ and $x_3=x_4$, which are not trivial in the sense of our
definition unless also~$x_1=x_3$. (In other words, a Sidon set can contain distinct elements $x_1, x_3$ satisfying $2x_1=2x_3$.) This
definition coincides with ours if $A$ has trivial $2$-torsion but not
in general: for instance, a vector space over~$\Ff_2$ does not
contain Sidon sets of size $\geq 2$ with our definition. Babai and Sós
proved (using a probabilistic argument) that any finite abelian
group~$A$ contains a set~$S$ with their property such
that~$|S|\gg |A|^{1/3}$; as far as we know, this remains the best general lower bound. Our results lead to new families of finite abelian groups~$A$ in which
Sidon sets~$S$ with $|S|\gg |A|^{1/2}$ are known to exist.

\begin{proposition}\label{pr-4}
  Let~$j\geq 1$ be an integer and $(n_i)_{1\leq i\leq j+1}$ a
  finite sequence of integers~$\geq 2$.  Suppose that there exist a prime number~$p$ and an integer~$n$ coprime
  to~$p$ such that
  \begin{gather*}
    p\leq n_i\text{ for } 1\leq i\leq j,\quad n\leq n_{j+1},\\
    |p^j+1-n|\leq 2p^{j/2}.
  \end{gather*}  Then there exists a Sidon set
  $$
  S \subset A=\prod_{1\leq i\leq j+1} \Zz/n_i\Zz
  $$
such that $|S|\gg (p/2)^j$,
  where the implicit constant is absolute.
\end{proposition}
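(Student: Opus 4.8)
The plan is to reduce everything to one clean model group and then transport a Sidon set from it into $A$ by an elementary no-wraparound argument. Since the claim is vacuous when $p=2$ (there $(p/2)^j=1$ and any singleton works), I assume $p$ is odd and set $q=p^j$, so that the Hasse bound $|q+1-n|\le 2\sqrt q$ and the coprimality $\gcd(n,p)=1$ become statements about curves over $\Ff_q$. The model group will be $G=(\Zz/p\Zz)^j\times\Zz/n\Zz$: its $j+1$ cyclic factors have moduli $p,\dots,p,n$, each bounded by the corresponding $n_1,\dots,n_j,n_{j+1}$, so that $G$ is, coordinate by coordinate, ``no larger'' than $A$. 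First I would produce a Sidon set of size $\gg n$ in $G$ from a generalized jacobian, and then move it into $A$; the factor $2^j$ lost along the way is exactly what converts $n\approx p^j$ into the target $(p/2)^j$ with an absolute constant.

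To build the Sidon set in $G$, I would appeal to the theory of Deuring, Waterhouse and Rück on point groups of elliptic curves over finite fields: because $\gcd(n,p)=1$ and $|q+1-n|\le 2\sqrt q$, one expects an elliptic curve $E/\Ff_q$ with $E(\Ff_q)$ cyclic of order $n$. Fix rational points $P\neq O$, put $\mfm=2(P)$ and $\delta=(O)$. Then $\dim J_{\mfm}=g+\max(\deg\mfm-1,0)=2$, and the extension \eqref{eqn:extensionjacobian} presents $J_{\mfm}(\Ff_q)$ as an extension of $E(\Ff_q)$ by $\Ga(\Ff_q)=\Ff_q^{+}\cong(\Zz/p\Zz)^j$; here the local factor at the double point contributes only its additive part, the toric part being killed by the diagonal, exactly as in the computation giving $B=k$ for a modulus $2(p)$ in the genus-one case. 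Since $\gcd(p^j,n)=1$, this extension splits and gives a group isomorphism $J_{\mfm}(\Ff_q)\cong(\Zz/p\Zz)^j\times\Zz/n\Zz=G$. By Theorem~\ref{th-classification}\,(1) the set $s((E\setminus\mfm)(\Ff_q))$ is a symmetric Sidon set in $J_{\mfm}(\Ff_q)$, and as $s$ is an embedding it has size $n-1$; desymmetrizing, at the cost of the bounded $2$-torsion, yields an honest Sidon set $S_1\subset G$ with $|S_1|\gg n\gg q$.

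To move $S_1$ into $A$, I would cut each cyclic factor of $G$ into two arcs of lengths $\lceil p/2\rceil$ (resp.\ $\lceil n/2\rceil$) and apply the pigeonhole principle: among the $2^{j+1}$ product cells, one contains at least $|S_1|/2^{j+1}$ points of $S_1$. Translating that cell to the origin (Sidon-ness being translation invariant) produces a Sidon set $S_2$ with its first $j$ coordinates in $[0,\lceil p/2\rceil)$ and its last in $[0,\lceil n/2\rceil)$, of size $|S_2|\ge |S_1|/2^{j+1}\gg q/2^{j+1}\gg (p/2)^j$, the $2^j$ cancelling to leave an absolute implicit constant. Finally I would reduce each integer coordinate modulo $n_i$. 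This is injective on the box because $\lceil p/2\rceil\le p\le n_i$ and $\lceil n/2\rceil\le n\le n_{j+1}$, and it preserves the Sidon property because $2\lceil p/2\rceil-1\le p\le n_i$ and $2\lceil n/2\rceil-1\le n\le n_{j+1}$ prevent any wraparound: a relation $x_1+x_2=x_3+x_4$ in $A$ among the reduced points lifts to an equality of integer vectors, hence to a quadruple in $G$, which is trivial since $S_2$ is Sidon. The image is thus a Sidon set in $A$ of size $\gg(p/2)^j$.

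The one genuinely delicate point—and the step I expect to be the main obstacle—is the existence of an elliptic curve over $\Ff_q$ whose group of rational points is cyclic of order exactly $n$. The two hypotheses are tailored precisely to this: coprimality pushes the entire $p$-part of $J_{\mfm}(\Ff_q)$ into the additive group and makes the extension split, while the ordinary case of Waterhouse's theorem supplies a curve with $n$ points and Rück's description of the admissible group structures supplies a cyclic one. I would still have to check the supersingular traces $t=q+1-n\equiv 0\pmod p$, which $\gcd(n,p)=1$ does not by itself exclude: I would either verify that they are covered by these existence theorems, tighten the hypothesis to $\gcd(t,p)=1$, or fall back on a higher-genus curve producing the same cyclic quotient. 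This is the only place where the clean ``one factor per coordinate'' bookkeeping could fail.
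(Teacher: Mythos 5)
Your proposal is correct and follows essentially the same route as the paper's own proof: an elliptic curve over~$\Ff_{p^j}$ with cyclic point group of order~$n$, the generalized jacobian $J_{2(0_E)}$ split as $(\Zz/p\Zz)^j\times\Zz/n\Zz$ using $\gcd(n,p)=1$, the symmetric Sidon set from Theorem~\ref{th-classification}, pigeonhole into $2^{j+1}$ half-length boxes, and a no-wraparound (Freiman isomorphism of order~$2$) transport into~$A$, with desymmetrization costing only the bounded $2$-torsion (the paper does this last step at the end rather than at the start, which is immaterial). The one step you leave hedged --- the existence of a curve with $E(\Ff_{p^j})$ cyclic of order exactly~$n$, including the traces divisible by~$p$ that the coprimality of~$n$ and~$p$ does not exclude --- is precisely what the paper settles by citing the independent results of R\"uck and Voloch, so no tightening of the hypotheses or higher-genus fallback is needed.
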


Roughly speaking, this means that if we have a prime number~$p$ and
integers~$j$ and~$n$ with~$n$ of size about $p^j$, then a finite abelian
group~$A$ which is ``close to'' the group
\begin{equation}
  \label{eq-e-sharp}
  (\Zz/p\Zz)^j\times \Zz/n\Zz
\end{equation}
(in some sense) contains a Sidon set of size~$\gg |A|^{1/2}$. By
contrast, the results of Babai and Sós~\cite[Prop.\,5.3]{babai-sos} give
such a lower bound for groups ``close to''
$$
(\Zz/p\Zz)^j.
$$
For groups like~(\ref{eq-e-sharp}), the bound of Babai and Sós is of
size~$|A|^{1/4}$, which is therefore worse than the general
probabilistic lower bound $|A|^{1/3}$.

\begin{proof}[Proof of Proposition~\ref{pr-4}]
  The argument is similar to the proof
  of~\cite[Prop.\,5.3]{babai-sos}.  Let~$k$ be a field with~$p^j$
  elements. Pick an elliptic curve~$E$ over $k$ such that $E(k)$ is
  cyclic of order~$n$ (which is possible, by independent work of
  Rück~\cite{rueck} and Volloch~\cite{voloch}), set
  $\widetilde{E}=E \setminus \{0_E\}$ and consider the generalized
  jacobian $E^{\sharp}=J_{2(0_E)}$. Choose a $k$-rational divisor of
  degree~$1$ to define the embedding
  $s\colon \widetilde{E}\to E^{\sharp}$. We also fix a group
  isomorphism
  $$
  E^{\sharp}(k)\to (\Zz/p\Zz)^j\times \Zz/n\Zz
  $$
  (which exists since there is an exact sequence
  $$
  0\to k\to E^{\sharp}(k)\to E(k)\simeq \Zz/n\Zz\to 0,
  $$
  and the assumption that $n=|E(k)|$ is coprime to $p$ implies that this
  exact sequence is split). We denote by
  $t\colon \widetilde{E}(k)\to (\Zz/p\Zz)^j\times \Zz/n\Zz$ the
  composition of~$s$ and such an isomorphism. The image
  $t(\widetilde{E}(k))$ is a symmetric Sidon set of size~$n-1$ by
  Theorem~\ref{th-classification}.


  For any integer~$q\geq 1$ and any~$x\in \Zz/q\Zz$, we denote
  by~$\dot{x}$ the integer such that~$0\leq \dot{x}\leq q-1$
  and~$x\equiv \dot{x}\mods{q}$. By an \emph{interval} in~$\Zz/q\Zz$,
  we mean the image modulo~$q$ of an interval $\{a,a+1,\ldots, b\}$,
  where~$0\leq a\leq b\leq q-1$; the integer~$a$ is called the
  \emph{origin} of the interval.

  
  By the pigeon-hole
  principle, there is a choice of intervals $I_i$
  for~$1\leq i\leq j$ (\resp $I$) with $I_i\subset \Zz/p\Zz$ for
  $1\leq i\leq j$ (\resp $I\subset \Zz/n\Zz$) and
  $|I_i|=\lfloor p/2\rfloor$ (\resp $|I|= \lfloor n/2\rfloor)$, so
  that
  $$
  |t(\widetilde{E}(k))\cap X|\geq \frac{|\widetilde{E}(k)|}{2^{j+1}},
  $$
  where we denote $X=I_1\times\cdots\times I_j\times I$.

  (We note in passing that a fairly simple appeal to the Riemann
  hypothesis over finite fields and discrete Fourier analysis
  on~$E^{\sharp}(k)$ shows that this will be valid for all choices of
  intervals of this size, provided $p$ is large enough and the constant
  $2^{-(j+1)}$ is replaced, say, by $2^{-(j+2)}$.)

  Let~$\alpha_j$ (resp. $\alpha$) be the origin of~$I_j$
  (resp. of~$I$). 
  Define an injective map
  $$
  \varphi\colon (\Zz/p\Zz)^j\times \Zz/n\Zz\to \Zz^{j+1}
  $$
  by the assignment
  $(a_1+p\Zz,\ldots,a_j+p\Zz,a+n\Zz)\mapsto
  (\dot{a}_1-\alpha_1,\ldots, \dot{a_j}-\alpha_j,\dot{a}-\alpha)$. The
  restriction of~$\varphi$ to $X$ is a \emph{Freiman isomorphism} of
  order~$2$ (i.e., for $x_1, \dots, x_4$ in~$X$, we have
  $x_1+x_2=x_3+x_4$ if and only
  if~$\varphi(x_1)+\varphi(x_2)=\varphi(x_3)+\varphi(x_4)$).

  In addition, by the assumptions $p\leq n_i$ and $n\leq n_{j+1}$, as well as the
  condition on the intervals, the image of~$E^{\sharp}(k)$ by $\varphi$
  (resp. the image of~$X$ by $\varphi$) is contained in the set
  \begin{gather*}
    \{0,\ldots, p-1\})^j\times \{0,\ldots, n-1\},\\
    (\resp \{0,\ldots, \lfloor p/2\rfloor\}^j\times
    \{0,\ldots, \lfloor n/2\rfloor\}).
  \end{gather*}

  In particular, the restriction of the canonical projection
  $\pi\colon \Zz^{j+1}\to A$ to the image of~$\varphi$ is injective,
  and the restriction of~$\pi$ to~$X$ is also a Freiman isomorphism of
  order~$2$.

  Thus, we have a Freiman isomorphism $\pi\circ \varphi$ of order~$2$
  from~$X$ to~$A$, and since $t(\widetilde{E}(k))$ is a symmetric Sidon
  set in~$(\Zz/p\Zz)^j\times\Zz/n\Zz$, we conclude that the set
  $$
  (\pi\circ\varphi)(t(\widetilde{E}(k))\cap X)\subset A
  $$
  is a symmetric Sidon set in~$A$ of size
  $\gg |\widetilde{E}(k)|\gg (p/2)^j$, where the implicit constant is
  absolute. Since~$E^{\sharp}(k)$ is cyclic, the size of its $2$-torsion
  subgroup is at most~$2$, so this symmetric Sidon set contains a Sidon
  set of size $\gg \demi |\widetilde{E}(k)|\gg (p/2)^j$ where the
  implicit constant is absolute.
\end{proof}

\section{Applications of Sidon sets in algebraic and arithmetic
  geometry}\label{sec-appl}

In this short section, we briefly recall some of the applications of
Sidon sets (even in cases where they are not very dense) in arithmetic
geometry.

(1) In work of Katz (see,
e.g.,~\cite[Th.\,2.8.1 and Th.\,7.9.6]{katz-esde}), the assumption that
the critical values of a polynomial, or the set of parameters of a
hypergeometric differential equation, form a Sidon set or a symmetric Sidon set in the additive group~$\Cc$ lead to computations of certain monodromy groups or differential Galois groups; here, even sets of~$3$ elements give
non-trivial results, and such computations in turn have a number of
important implications (see, for instance,~\cite{fresan-jossen}
and~\cite{ks2} for recent examples).

(2) The fact that the sets in Theorem~\ref{th-classification} are Sidon
sets or symmetric Sidon sets leads by our
work~\cite[Ch.\,7 and \S\,9.3]{ffk} to equidistribution results for
exponential sums over finite fields parameterized by characters of the
groups $J_{\mfm}(k)$. The (symmetric) Sidon property allows us
to compute the so-called fourth moment of the relevant tannakian group which controls the distribution properties of
the sums, and to almost determine it by means of Larsen's alternative~\cite{katz-larsen}. We emphasize again here that symmetric
Sidon sets arise just as naturally as Sidon sets, and that the size of
the sets is not particularly relevant. In the special case
$\mfm=0$, this was already used by Katz in 2010 to answer a
question of Tsimerman (unpublished, but see~\cite[Th.\,12.1]{ffk}).

(3) In another paper of Katz~\cite{katz-4-lectures}, it is shown how to
use Larsen's alternative to prove some of the key statements in
Deligne's second proof~\cite{weil2} of the Riemann hypothesis over
finite fields. The crucial moment computation (performed in
cohomological form in \hbox{\cite[p.\,120,\,Step\,5]{katz-4-lectures}})
relies ultimately (but implicitly) on the fact that the parabola
$(x,x^2)$ in~$k^2$ is a Sidon set in odd characteristic. As we pointed
out above, this is the case of the curve $C=\Pp^1$ and the modulus $\mfm=3(\infty)$ of
Theorem~\ref{th-classification}.

\section{Proofs}

We will now prove Theorem~\ref{th-classification}.  We first notice that
we may reduce to the case where the field $k$ is algebraically closed. Indeed,
let~$\bar{k}$ be an algebraic closure of~$k$, and let~$S$ be a subset
of~$J_{\mfm}(k)$. By definition, $S$ is a Sidon set
in~$J_{\mfm}(k)$ if and only if it is one
in~$J_{\mfm}(\bar{k})$. Similarly, if~$S$ is a symmetric Sidon
set in~$J_{\mfm}(k)$ with center~$a\in J_{\mfm}(k)$,
then it is one in~$J_{\mfm}(\bar{k})$, with the same center. But
also, suppose that~$S$ is a symmetric Sidon set
in~$J_{\mfm}(\bar{k})$ with
center~$a\in J_{\mfm}(\bar{k})$. Then either~$S$ is empty, or we
can find~$x$ and~$y$ in~$S$ (maybe equal) such that~$x=a-y$,
hence~$a=x+y\in J_{\mfm}(k)$, and it follows that~$S$ is a
symmetric Sidon set in~$J_{\mfm}(k)$ in all cases.

Thus we assume that~$k$ is algebraically closed and 
$\dim(J_{\mfm})\geq 2$.
  
We recall that if~$\mfm\geq \mfm'\geq 0$ and~$\delta$ is
a divisor of degree~$1$ with support disjoint from that
of~$\mfm$, then there is a commutative diagram
$$
\begin{tikzcd}
  J_{\mfm} \arrow[r,"f"]
  & J_{\mfm'}\\
  C\setminus \mfm\arrow[r,"i"]\arrow[u,"s"] &
  C\setminus \mfm'\arrow[u,"s"]
\end{tikzcd}
$$
where~$i$ is the inclusion and~$f$ is a group homomorphism
(see~\cite[V.3.12,\,Prop.\,6]{serre}). This implies that if
$s((C\setminus \mfm')(k))$ is a Sidon set in~$J_{\mfm'}(k)$,
then so is~$s((C\setminus \mfm)(k))$ in~$J_{\mfm}(k)$.

This means that we can reduce the proof of
Theorem~\ref{th-classification} to the following cases:
\begin{enumerate}
\item $g=0$ and $\deg(\mfm)=3$ (obtaining Sidon sets); 
\item $g=1$ and $\deg(\mfm)=2$ (obtaining symmetric Sidon sets); 
\item $g=1$ and $\deg(\mfm)\geq 3$ (obtaining Sidon sets); 
\item $g\geq 2$, $\mfm=0$ and $C$ not hyperelliptic (obtaining
  Sidon sets); 
\item $g\geq 2$ and~$C$ hyperelliptic (obtaining either Sidon sets or
  symmetric Sidon sets).
\end{enumerate}

\begin{proof}[Case \emph{(1)}]
  This corresponds to the three ``classical'' Sidon sets discussed in
  Section~\ref{sec-remarks}, but it is also straightforward to show that
  we obtain Sidon sets directly from the definition.

Thanks to the action of~$\PGL_2(k)$ on~$\Pp^1(k)$, it suffices to handle the cases
  $\mfm=3(\infty)$, $\mfm=2(\infty)+(0)$ and 
  $\mfm=(\infty)+(0)+(1)$. In any case, let $x_1$, \ldots, $x_4$
  be points of $(\Pp^1\setminus \mfm)(k)$ satisfying
  $s(x_1)+s(x_2)=s(x_3)+s(x_4)$, and assume that $x_1\notin
  \{x_3,x_4\}$. There is a unique rational function
  $\varphi\colon \Pp^1\to \Pp^1$ such that $\varphi(\infty)=1$ which
  vanishes at $x_1$ and $x_2$ and has poles at $x_3$ and $x_4$, namely
  $$
  \varphi=\frac{(X-x_1)(X-x_2)}{(X-x_3)(X-x_4)}.
  $$
  Thus the Sidon equation holds if and only if the divisor of $\varphi$ belongs to $P_{\mfm}(\Pp^1)$. Now:

  \begin{enumerate}
  \item If $\mfm=3(\infty)$, we need $v_{\infty}(\varphi-1)\geq
    3$, which is impossible because $\varphi$ has degree~$2$.
  \item If $\mfm=2(\infty)+(0)$, we need
    $(x_1x_2)/(x_3x_4)=\varphi(0)=1$ and $v_{\infty}(\varphi-1)\geq 2$;
    using the uniformizer $Z=1/X$ at infinity, the last condition is seen to be
    equivalent to $x_1+x_2=x_3+x_4$. But the two equations
    $x_1x_2=x_3x_4$ and $x_1+x_2=x_3+x_4$ imply that
    $\{x_1,x_2\}=\{x_3,x_4\}$, both sets being the roots of the same polynomial of degree~$2$. 
  \item If $\mfm=(\infty)+(0)+(1)$, we need the equalities 
    $(x_1x_2)/(x_3x_4)=\varphi(0)=1$ and
    $(1-x_1)(1-x_2)/((1-x_3)(1-x_4))=\varphi(1)=1$ to hold, and expanding we see
    that these two conditions are equivalent to $x_1x_2=x_3x_4$ and
    $x_1+x_2=x_3+x_4$ again, so we obtain once more
    $\{x_1,x_2\}=\{x_3,x_4\}$.
  \end{enumerate} This concludes the proof of the first case. 
\end{proof}

For case (2), recall that the group law of an elliptic curve $E \subset \Pp^2$ with neutral element~$0_E$ is characterized by the condition that if $\ell$ is
a line in~$\Pp^2$, then the sum of the intersection points of~$\ell$
and~$E$ (with multiplicity) is equal to~$0_E$. We will use the following.

\begin{lemma}\label{lm-geometry}
  Let~$E$ be an elliptic curve over~$k$ with
  $\widetilde{E}=E\setminus\{0_E\}$ given by a Weierstrass equation
  $$
  y^2+a_1xy+a_3y=f(x),\quad f\in k[X],\quad \deg(f)=3,\quad f\text{
    squarefree.}
  $$
  Let~$(x_1,\ldots,x_4)$ be points of~$\widetilde{E}(k)$ such that
  $x_1+x_2=x_3+x_4$. Then the line in the affine plane joining~$x_1$ to~$x_2$, or by
  convention the tangent line to the curve at~$x_1$ if~$x_1=x_2$, is
  parallel to the line joining~$x_3$ to~$x_4$, with the same convention,
  if and only if either~$\{x_1,x_2\}=\{x_3,x_4\}$, or $x_2=-x_1$. If the
  two lines are equal, then $\{x_1,x_2\}=\{x_3,x_4\}$.
\end{lemma}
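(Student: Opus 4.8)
The plan is to translate the whole statement into the group law, whose defining property (recalled just above the lemma) is that the three points in which a line meets $E$, counted with multiplicity, sum to $0_E$. With the stated convention, the line $L_{12}$ joining $x_1$ to $x_2$ (the tangent at $x_1$ when $x_1=x_2$) cuts out on $E$ an effective divisor $D_{12}$ of degree $3$ with $D_{12}\geq (x_1)+(x_2)$; hence $D_{12}=(x_1)+(x_2)+(R_{12})$ with $R_{12}=-(x_1+x_2)$, and likewise $L_{34}$ meets $E$ in $(x_3)+(x_4)+(R_{34})$ with $R_{34}=-(x_3+x_4)$. The crucial observation is that, since $x_1+x_2=x_3+x_4=:\sigma$, the two residual points coincide: $R_{12}=R_{34}=-\sigma=:R$, so both lines pass through the single point $R\in E$.

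First I would record the description of vertical lines. In the Weierstrass model the neutral element $0_E=[0:1:0]$ is the unique point of $E$ at infinity, and the affine lines through $0_E$ are exactly the vertical lines $x=\mathrm{const}$. Thus $L_{12}$ is vertical if and only if $0_E$ is one of its three intersection points, that is, $R=-\sigma=0_E$, equivalently $x_2=-x_1$; and because $x_3+x_4=\sigma$, this is in turn equivalent to $x_4=-x_3$.

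Then I would split according to whether $\sigma=0_E$. If $\sigma=0_E$, both lines are vertical, hence both pass through $0_E$ and are parallel, and we are in the alternative $x_2=-x_1$. If $\sigma\neq 0_E$, then $R$ is a finite point lying on both (now necessarily non-vertical) lines; two affine lines sharing a point can be parallel only if they are equal, so $L_{12}\parallel L_{34}$ forces $L_{12}=L_{34}$. Equality of the lines forces equality of their intersection divisors, $(x_1)+(x_2)+(R)=(x_3)+(x_4)+(R)$, and cancelling $(R)$ gives $(x_1)+(x_2)=(x_3)+(x_4)$ as effective divisors, i.e. $\{x_1,x_2\}=\{x_3,x_4\}$ as multisets. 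Conversely, if $\{x_1,x_2\}=\{x_3,x_4\}$ then $L_{12}=L_{34}$ (the pair of points, or the tangent, determines the line), so the lines are parallel; and if $x_2=-x_1$ both lines are vertical, hence parallel. This establishes the equivalence, and the very same divisor cancellation (valid whether or not $\sigma=0_E$, with $R=0_E$ in the vertical case) shows that equal lines force $\{x_1,x_2\}=\{x_3,x_4\}$.

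The main obstacle I anticipate is purely the bookkeeping in degenerate configurations: I must check that the intersection divisor really equals $(x_1)+(x_2)+(R)$ even when some of $x_1,x_2,R$ collide (for instance $x_1=x_2$, giving the tangent, or $R\in\{x_1,x_2\}$, giving a higher-order tangency), so that the cancellation leading to $(x_1)+(x_2)=(x_3)+(x_4)$ is legitimate. This is exactly the point where the tangent-line convention in the statement earns its keep. Everything else reduces to the elementary fact that two parallel affine lines meeting at a finite point must coincide, together with the identity $R_{12}=R_{34}$.
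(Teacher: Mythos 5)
Your proof is correct and takes essentially the same route as the paper's: both arguments hinge on the observation that $x_1+x_2=x_3+x_4$ forces the two lines to share the residual intersection point $-(x_1+x_2)$, that this point is $0_E$ (so the lines pass through infinity, i.e.\ are vertical and parallel) exactly when $x_2=-x_1$, and that otherwise two parallel lines through a common affine point must coincide. Your one deviation is cosmetic but welcome: where the paper concludes from equal lines via a set-level contradiction argument on $\ell\cap\widetilde{E}(k)$, you cancel the residual point at the level of intersection \emph{divisors}, which treats the tangency and multiplicity degeneracies (including the vertical case of the final clause) uniformly.
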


\begin{proof}
  If~$x_2=-x_1$, then~$x_4=-x_3$, so that the two lines meet at the
  point at infinity, and hence are parallel in the affine plane.

  Conversely, we assume that~$x_2\not=-x_1$ (and hence $x_4\not=-x_3$).
  By the geometric description of the group law, the condition
  $x_1+x_2=x_3+x_4$ means that the two lines indicated meet in the
  \emph{affine} plane at the point $-(x_1+x_2)=-(x_3+x_4)\not=0_E$.
  Since the lines are parallel, they are equal. Then this common
  line~$\ell$ satisfies
  $$
  \ell\cap
  \widetilde{E}(k)=\{x_1,x_2,-(x_1+x_2)\}=\{x_3,x_4,-(x_3+x_4)\}.
  $$
  This implies~$x_1\in \{x_3,x_4\}$. Indeed, otherwise we would
  have~$x_1=-(x_3+x_4)=-(x_1+x_2)$, so
  that~$\ell\cap \widetilde{E}(k)=\{x_1,x_2\}$ and
  hence~$x_3\in \{x_1,x_2\}$, which yields a contradiction.
\end{proof}

\begin{proof}[Case \emph{(2)}]
  Let $p$ and $q$ be points of~$C(k)$ (not necessarily distinct) such
  that $\mfm=(p)+(q)$, and let $s$ denote an immersion
  $x\mapsto (x)-\delta$ from~$C\setminus\{p,q\}$ to~$J_{\mfm}$.
  Taking~$q$ as the origin of the group law, we can view~$C$ as an
  elliptic curve, which we denote by~$E$ and view as a
  smooth plane cubic curve.

  Let $x_1$, \ldots, $x_4$ in~$E\setminus \{p,q\}$ be solutions of
  $s(x_1)+s(x_2)=s(x_3)+s(x_4)$. We denote by $L_{12}$ (resp.~$L_{34}$)
  the line in the projective plane passing through $x_1$ and~$x_2$, or
  the tangent line to~$E$ at~$x_1$ if $x_1=x_2$ (resp. the line in the
  projective plane passing through $x_3$ and~$x_4$, or the tangent line
  to~$E$ at~$x_3$ if $x_3=x_4$).

  If $L_{12}=L_{34}$ then we deduce from Lemma~\ref{lm-geometry} that
  $\{x_1,x_2\}=\{x_3,x_4\}$. We assume that this is not the case. Let~$r$ be the intersection point of~$L_{12}$ and~$L_{34}$. Since the
  assumption implies that $x_1+x_2=x_3+x_4$, the description of the
  group law implies that~$r$ lies in $E(k)$. We denote by $\Pp^1_r$ the space
  of projective lines in~$\Pp^2$ passing through~$r$; it is an algebraic
  curve, isomorphic to $\Pp^1$, and we identify it with~$\Pp^1$ in such
  a way that~$L_{12}=0$ and $L_{34}=\infty$.

  Now define a map $E\setminus\{r\}\to \Pp^1_r$ by sending~$x$ to the
  line joining~$r$ and~$x$. This is an algebraic map and can be extended
  to a morphism $\varphi\colon E\to \Pp^1_r=\Pp^1$ such that
  $\varphi(r)$ is the tangent line to~$E$ at~$r$.

  We claim that $\varphi^{-1}(0)=\varphi^{-1}(L_{12})=\{x_1,x_2\}$ and
  $\varphi^{-1}(\infty)=\varphi^{-1}(L_{34})=\{x_3,x_4\}$. Indeed, the
  equality \hbox{$\varphi(x_1)=\varphi(x_2)=L_{12}$} holds by
  definition, and the only other~$x\in E$ that may map to~$L_{12}$ is
  $x=r$. But $\varphi(r)=L_{12}$ means that the tangent line at~$r$
  passes through~$x_1$ and through~$x_2$. Since~$E$ is a cubic, this is
  only possible if~$x_1=x_2$, and then if also $x_1=x_2=r$.

  The function $\varphi\colon E\to \Pp^1$ has divisor
  $(x_1)+(x_2)-(x_3)-(x_4)$. In particular, $\varphi(p)$ and
  $\varphi(q)$ are in~$k^{\times}$.  The definition of $J_{\mfm}$ shows
  that the equation $s(x_1)+s(x_2)=s(x_3)+s(x_4)$ is valid if and only
  if
  \begin{itemize}
  \item $p\not=q$ and $\varphi(p)=\varphi(q)$;
  \item or $p=q$ and $p$ is a zero of order $\geq 2$ of
    $\varphi(p)^{-1}\varphi-1$.
  \end{itemize}

  If $p\not=q$ then the condition $\varphi(p)=\varphi(q)$ means that the
  line joining $r$ to~$p$ is the same as the line joining $r$ to $q$,
  with the usual tangent convention if~$r=p$ or $r=q$. This means that
  $r$, $p$, $q$ are on the same line, so that $r+p+q=0$, hence
  $x_1+x_2=x_3+x_4=-r=p+q$. Conversely, the equality
  $\varphi(p)=\varphi(q)$ holds if $x_1+x_2=p+q=x_3+x_4$.

  If $p=q$, a moment's thought (or the crutch of writing down equations)
  shows that the condition that $p$ is a zero of order $\geq 2$ of
  $\varphi(p)^{-1}\varphi-1$ is valid if and only if the line joining
  $p$ and~$r$ is the tangent line to~$E$ at~$r$. This translates to
  $2p+r=0$, and hence to $x_1+x_2=x_3+x_4=2p$.

  Since the conditions above are equivalent with
  $s(x_1)+s(x_2)=s(x_3)+s(x_4)$, this concludes the proof of Case~(2).
\end{proof}  

\begin{proof}[Case~\emph{(3)}]
  Let~$x_1, x_2, x_2, x_4$ be $k$-points of~$C\setminus \mfm$ satisfying $s(x_1)+s(x_2)=s(x_3)+s(x_4)$ and~$x_1\notin \{x_3,x_4\}$.  For
  any effective divisor~$\mfm'=(p)+(q)$ of degree~$2$ such
  that~$\mfm\geq \mfm'$, projecting
  to~$J_{\mfm'}$ and applying case~(2), we see
  that~$x_1+x_2=x_3+x_4=p+q$. Varying~$p$ and~$q$ among $k$-points in the support of~$\mfm$, we see that~$\mfm$ is of the
  form~$\mfm=d(p)$ for some point~$p\in E(k)$ and~$d\geq 3$.  The
  Sidon equation then holds if and only if the unique
  function~$\varphi$ with divisor $(x_1)+(x_2)-(x_3)-(x_4)$
  with~$\varphi(p)=1$ is such that~$\varphi-1$ vanishes to
  order~$\geq d$ at~$p$. Since~$\varphi$ has degree~$\leq 2$, this is
  not possible. Thus, $s((C\setminus\mfm)(k))$ is a Sidon~set.
\end{proof}

\begin{proof}[Case~\emph{(4)}]
  Here~$C$ has genus~$\geq 2$, the modulus $\mfm$ is trivial and~$C$ is not
  hyperelliptic. Let $x_1, x_2, x_3, x_4$ be points in~$C(k)$ such
  that $s(x_1)+s(x_2)=s(x_3)+s(x_4).$ If $x_1\notin\{x_3,x_4\}$, this implies the existence of a rational
  function on $C$ with set of zeros $\{x_1, x_2\}$ and set of poles
  $\{x_3, x_4\}$, which corresponds to a morphism
  $f \colon C \to \Pp^1$ of degree at most~$2$. By definition, this is
  not possible unless $C$ is hyperelliptic (see,
  e.g.,~\cite[Def.~7.4.7]{Liu}), hence the result.
\end{proof}

\begin{proof}[Case~\emph{(5)}]
  Finally, we assume that~$C$ has genus $\geq 2$ and is
  hyperelliptic.

  We first assume that~$\mfm=0$. Let $x_1$, $x_2$, $x_3$, $x_4$
  be points in~$C(k)$ such that $s(x_1)+s(x_2)=s(x_3)+s(x_4)$ and
  $x_1\notin\{x_3,x_4\}$. As in the previous case, this implies the
  existence of a rational function on $C$ with set of zeros
  $\{x_1, x_2\}$ and set of poles $\{x_3, x_4\}$, which corresponds to a
  morphism $f \colon C \to \Pp^1$ of degree at most~$2$. Since there exists on~$C$ a unique morphism to~$\Pp^1$ of degree~$2$,
  up to automorphisms (see, e.g.,~\cite[Rem.\,7.4.30]{Liu}), the hyperelliptic involution~$i$ exchanges the points on the
  fibers of~$f$, whence $x_2=i(x_1)$ and $x_4=i(x_3)$.

  Conversely, for any $x_1$ and~$x_3$, a function $\varphi$ with divisor
  $(x_1)+(i(x_1))-(x_3)-(i(x_3))$ is given by $\varphi=\sigma\circ \pi$,
  where $\pi$ is the quotient $C\to C/i$ modulo the hyperelliptic
  involution and $\sigma \colon C/i\to \Pp^1$ is an isomorphism which
  maps $\pi(x_1)=\pi(i(x_1))$ to~$0$ and~$\pi(x_3)=\pi(i(x_3))$
  to~$\infty$.  This implies that $s(x_1)+s(i(x_1))=s(x_3)+s(i(x_3))$ holds 
  in~$J(k)$. In particular, the element $s(x)+s(i(x))$ in~$J(k)$ is
  independent of~$x\in C(k)$. If we denote it by~$a$, then we have
  $ a-s(x)=s(i(x))$ for all~$x$, so that~$s(C(k))$ is a symmetric Sidon
  set with center~$a$.

  Now assume that $\mfm$ has degree~$\geq 2$. If~$x_1$, \ldots,
  $x_4$ are such that $s(x_1)+s(x_2)=s(x_3)+s(x_4)$ and
  $x_1\notin \{x_3,x_4\}$, then by comparing with $\mfm'=0$, we
  deduce that we must have~$x_2=i(x_1)$ and~$x_4=i(x_3)$. Consider the
  function $\varphi=\sigma\circ \pi$ described previously.  The Sidon
  equation in~$J_{\mfm}$ requires that for all~$p$ in the
  support of~$\mfm$, the value of $\varphi(p)$ is the same,
  which can only be the case if~$\mfm$ has degree~$2$
  since~$\varphi$ itself has degree~$2$. As a first consequence, this
  means that the image of~$C$ is a Sidon set
  if~$\deg(\mfm)\geq 3$.

  On the other hand, if $\mfm=(p)+(q)$, then
  $\varphi(p)=\varphi(q)$ if and only if~$q=i(p)$. Thus the image of~$s$
  is again a Sidon set if this is not the case. Finally, if
  $\mfm$ is of this form, then we do
  have~$\varphi(p)=\varphi(i(p))$ (resp. $\varphi-\varphi(p)$ has a zero
  of order~$2$ at~$p$, if $p=i(p)$), so the image of~$s$ is then a
  symmetric Sidon set, with center the common value of~$s(x)+s(i(x))$
  for any~$x\in C\setminus \{p\}$.
\end{proof}

\section{The universal vector extension of an elliptic curve}

In the special case where the curve~$C$ is an elliptic curve~$E$, with origin~$0_E$, and the modulus~$\mfm$ is
$2(0_E)$, the generalized jacobian $E^{\sharp}=J_{2(0_E)}$ is also
classically known through other interpretations, related to its
identification with the so-called \emph{universal vector extension} of~$E$ (see
Coleman's paper~\cite[Prop.\,1.2]{coleman} for this relation).  We now
present, for the sake of variety, two proofs of Theorem~\ref{th-classification} in this
case, using these alternative descriptions.

\begin{proof}[Analytic proof over the complex numbers]
  We use an explicit description by Katz~\cite[App.\,C]{katz-eisenstein}
  of the universal extension and the morphism~$s$ (another concrete
  discussion by Corvajá, Masser and Zannier can be found
  in~\cite[\S\,3]{cmz}), which applies when~$k=\Cc$.

  Let $\Lambda\subset \Cc$ be a lattice so that
  $E(\Cc)\simeq\Cc/\Lambda$. Let $\wp$ denote the Weierstrass function
  for~$\Lambda$ and $\zeta$  the Weierstrass zeta
  function (so that $\zeta'=-\wp$; see, e.g.~\cite[Ch.\,20]{ww} for the
  classical theory of elliptic functions). Define a group homomorphism $\eta \colon \Lambda \to \Cc$ by setting 
   $$
  \eta(\ell)=\int_{p}^{p+\ell} \wp(z)dz=\zeta(p)-\zeta(p+\ell)
  $$
  where $p$ is any point in $\Cc \setminus \Lambda$ and the integration path avoids $\Lambda$. Let
  $\Lambda^{\sharp} \subset \Cc^2$ be the subgroup of elements of the
  form $(\ell,-\eta(\ell))$ for $\ell\in\Lambda$.
  
  Katz~\cite[p.\,300--301]{katz-eisenstein} shows that there is an isomorphism of complex Lie groups  
  $$
  E^{\sharp}(\Cc) \simeq \Cc^2/\Lambda^{\sharp}
  $$
 which is compatible with the projection to $E(\Cc)\simeq\Cc/\Lambda$. Under this identification, the embedding  $s\colon \widetilde{E}(\Cc)=(\Cc/\Lambda) \setminus\{0\}\to
  E^{\sharp}(\Cc)=\Cc^2/\Lambda^{\sharp}$ is given by the formula
  $$
  s(\alpha)=(\alpha,\zeta(\alpha))\bmod \Lambda^{\sharp}
  $$
  (see~\cite[Th.\,C.6\,(2)]{katz-eisenstein}).
  
  Now let $\alpha_1, \ldots, \alpha_4 \in \widetilde{E}(\Cc)$ satisfy 
  $$
  s(\alpha_1)+s(\alpha_2)=s(\alpha_3)+s(\alpha_4).
  $$ There exist representatives $(\alpha_i, \zeta(\alpha_i)) \in \Cc^2$ of $s(\alpha_i)$ such that the equation $\alpha_1+\alpha_2=\alpha_3+\alpha_4$ holds in $\Cc$. Then these representatives necessarily also satisfy 
  $$
  \zeta(\alpha_1)+\zeta(\alpha_2)=\zeta(\alpha_3)+\zeta(\alpha_4). 
  $$
  Let $x_i=(\wp(\alpha_i), \wp'(\alpha_i))=(a_i, b_i) \in \Cc^2$ be the points in $E(\Cc)$ corresponding to $\alpha_i \in \Cc/\Lambda$. 
  
  Let us first suppose that~$\alpha_1\not=\alpha_2$
  and~$\alpha_3\not=\alpha_4$. By a classical formula
  (see~\cite[p.\,304]{katz-eisenstein}, citing Whittaker and
  Watson~\cite[p.\,451,\,ex.\,2]{ww}), we have
  $$
  \zeta(\alpha_i)+\zeta(\alpha_j)=\zeta(\alpha_i+\alpha_j)+\frac{1}{2}\,
  \frac{\wp'(\alpha_i)-\wp'(\alpha_j)}{\wp(\alpha_i)-\wp(\alpha_j)},
  $$
  so the equation becomes
  $$
  \frac{\wp'(\alpha_1)-\wp'(\alpha_2)}{\wp(\alpha_1)-\wp(\alpha_2)}=
  \frac{\wp'(\alpha_3)-\wp'(\alpha_4)}{\wp(\alpha_3)-\wp(\alpha_4)},
  $$
  or in other words
  $$
  \frac{b_1-b_2}{a_1-a_2}= \frac{b_3-b_4}{a_3-a_4},
  $$
  or equivalently the line joining~$x_1$ to~$x_2$ is parallel to the
  line joining~$x_3$ to~$x_4$. We can then apply
  Lemma~\ref{lm-geometry}.

  In the remaining cases where~$\alpha_1=\alpha_2$
  (or~$\alpha_3=\alpha_4$) we argue as before with points
  $\alpha'_2\not=\alpha_1$ (or~$\alpha'_4\not=\alpha_3$) converging to
  $\alpha_2$ (or~$\alpha_4$) and deduce that the same condition as
  above holds where the slopes of the line joining~$\alpha_1$
  to~$\alpha_2$ is replaced where needed by the slopes of the tangent
  line at~$\alpha_1$ (and similarly with $\alpha_3$ and~$\alpha_4$).
\end{proof}

We have yet another argument in characteristic~$\geq 5$ using the
interpretation of $E^{\sharp}$ in terms of connections and
differentials of the third kind.

\begin{proof}[Proof with connections]
  We assume that~$k$ is algebraically closed of characteristic~$\geq
  5$. As explained by Katz~\cite{katz}, the points of $E^{\sharp}$ can
  be interpreted as isomorphism classes of pairs~$(\mcL,\nabla)$ consisting of a
 line bundle~$\mcL$ on~$E$ and a connection
  $\nabla \colon \mcL \to \mcL \otimes \Omega^1_E$.

  Since the characteristic is at least~$5$, we can describe an immersion
  $\widetilde{E}\to E^{\sharp}$ in that case as follows (see, for
  instance,~\cite[Lemma\,2.1]{katz}
  or~\cite[C.2,\,C.3]{katz-eisenstein}). First, view~$E$ as a plane
  cubic curve in short Weierstrass form $Y^2=f(X)$, and for
  $x=(a,b)\in \widetilde{E}$, let
  $$
  \omega_x=\frac{1}{2}\,\frac{Y+b}{X-a}\,\frac{dX}{Y}
  $$
  (this is a meromorphic differential $1$-form $\omega_x$ on~$E$ which
  has only simple poles at $x$ and~$0$, with residue~$1$ at~$x$ and
  residue~$-1$ at~$0$; classically, these are called ``differentials of
  the third kind'' on~$E$).  We then define $s(x)=(\mcL_x,\nabla_x)$,
  where $\mcL_{x}=\mcO((x)-(0))$ and the connection $\nabla_x$ is
  defined for a local section~$\xi$ of $\mcL_x$ by
  $$
  \nabla_x(\xi)=d\xi-\xi\omega_x.
  $$
  (which is easily checked to be well-defined).
  
  Let again $x_1$, \ldots, $x_4$ be elements of~$\widetilde{E}(k)$ such
  that $s(x_1)+s(x_2)=s(x_3)+s(x_4)$ holds. The points $s(x_1)+s(x_2)$ and $s(x_3)+s(x_4)$ are, respectively, the isomorphism classes of $(\mcL_{12},\nabla_{12})$ and $(\mcL_{34},\nabla_{34})$, with the notation 
  $$
  \mcL_{ij}=\mcO((x_i)+(x_j)-2(0_E)), \quad \nabla_{ij}(\xi)=d\xi-\xi(\omega_{x_i}+\omega_{x_j}).
  $$

  The equation implies as in the previous proof
  that~$x_1+x_2=x_3+x_4$, so that there
  exists a non-zero rational function~$\varphi \colon E \to \Pp^1$ with divisor
  $(x_1)+(x_2)-(x_3)-(x_4)$.  Multiplication by $1/\varphi$ is then an
  isomorphism $\mcL_{12}\to \mcL_{34}$, and there is no other
  isomorphism up to multiplication by a non-zero constant.
  
  For a local section $\xi$ of $\mcL_{12}$, we have
  $$
  \frac{1}{\varphi}\nabla_{12}(\xi)=\frac{1}{\varphi}
  (d\xi-\xi(\omega_{x_1}+\omega_{x_2})),
  $$
  while, on the other hand, we have
  $$
  \nabla_{34}(\xi/\varphi)=d(\xi\varphi^{-1})
  -\frac{\xi}{\varphi}(\omega_{x_3}+\omega_{x_4})=
  \frac{d\xi}{\varphi}-\xi\frac{d\varphi}{\varphi^2}
  -\frac{\xi}{\varphi}(\omega_{x_3}+\omega_{x_4}).
  $$
  
  The equation $s(x_1)+s(x_2)=s(x_3)+s(x_4)$ is therefore equivalent
  with the condition that the formula
  $$
  \frac{1}{\varphi} (d\xi-\xi(\omega_{x_1}+\omega_{x_2}))=
  \frac{d\xi}{\varphi}-\xi\frac{d\varphi}{\varphi^2}
  -\frac{\xi}{\varphi}(\omega_{x_3}+\omega_{x_4})
  $$
  holds for all local sections~$\xi$. This boils down to the equality
  $$
  \omega_{x_1}+\omega_{x_2}=\frac{d\varphi}{\varphi}
  +\omega_{x_3}+\omega_{x_4}
  $$
  of meromorphic differentials on~$E$.
  
  From the known poles and residues of these differentials, we see
  that the difference of the left and right-hand sides has no poles,
  and hence is a constant multiple of the holomorphic differential~$dx/y$. To determine the constant,
  say~$\alpha$, and check when it vanishes, we look close
  to~$0_E$. Using the uniformizer~$x/y$, the properties of the
  $\omega_{x_i}$ show that $\alpha=0$ if and only
  if~$d\varphi/\varphi$ is~$0$ at~$0_E$.

  Recall that we view~$\widetilde{E}$ as a plane Weierstrass curve, and
  as before we can take $\varphi=\ell_{12}/\ell_{34}$, where
  $\ell_{ij}=\alpha_{ij}x+\beta_{ij}y+\gamma_{ij}$ defines the affine
  line joining~$x_i$ to~$x_j$ in the plane (resp. the tangent line
  to~$E$ at~$x_i$ if~$x_i=x_j$). Thus
  $$
  \frac{d\varphi}{\varphi}=\frac{d\ell_{12}}{\ell_{12}}-
  \frac{d\ell_{34}}{\ell_{34}},\quad\quad
  \frac{d\ell_{ij}}{\ell_{ij}}=\frac{\alpha_{ij}dx}{\ell_{ij}}
  +\frac{\beta_{ij}dy}{\ell_{ij}}.
  $$
  \par
  Let~$\pi=x/y$ be a uniformizer at~$0_E$.  Then
  $$
  \frac{\alpha_{ij}dx}{\ell_{ij}}=
  \Bigl(\frac{\alpha_{ij}}{\beta_{ij}}+O(\pi)\Bigr)\frac{dx}{y},
  $$
  while~$dy/\ell_{ij}$ has a pole at~$0$. Thus the contributions of
  these last terms must cancel, as well as those involving~$O(\pi)$,
  and we find that
  $$
  (\omega_{x_1}+\omega_{x_2})-\Bigl(\frac{d\varphi}{\varphi}
  +\omega_{x_3}+\omega_{x_4}\Bigr)=
  \Bigl(\frac{\alpha_{12}}{\beta_{12}}-
  \frac{\alpha_{34}}{\beta_{34}}\Bigr)\frac{dx}{y}.
  $$

  We therefore have~$\alpha=0$ if and only if
  $\alpha_{12}/\beta_{12}-\alpha_{34}/\beta_{34}=0$, which once more
  means that the lines~$\ell_{12}$ and~$\ell_{34}$ are parallel,
  allowing us to conclude using Lemma~\ref{lm-geometry}.
\end{proof}

\section{Final questions}

We conclude with some natural questions arising from this work:

\begin{enumerate}
\item Are there other interesting examples of Sidon sets arising from algebraic geometry? In~\cite[\S~12.2]{ffk}, we point
  out that a classical construction related to smooth cubic threefolds
  leads to a morphism~$s$ from a surface~$S$ to an abelian variety~$A$
  of dimension~$5$ such that the equation $s(\ell_1)+s(\ell_2)=x$
  admits generically either zero or six solutions for given~$x\in A$.
\item How close are the Sidon sets that we construct from being
  \emph{maximal}? In particular, can one embed one of the
  fairly dense examples into even larger Sidon sets?
\item What are the most general statements of existence of Sidon sets in
  ``abstract'' finite abelian groups that can be deduced from these
  constructions?
\end{enumerate}

\bibliographystyle{abbrv}
\bibliography{sidon}

\end{document}